\newtheorem{thm}{Theorem}[section]
\newtheorem{lem}[thm]{Lemma}
\theoremstyle{definition}
\theoremstyle{remark}
\newtheorem{rem}[thm]{Remark}
\def\R{\mathbb R}
\def\E{\mathbb E}
\def\H{\mathbb H}
\def\Sph{\mathbb S}
\def\const{\mathrm{const}}
\def\Vol{\operatorname{Vol}}
\title[The Regge symmetry, confocal conics, and the Schl\"afli formula]{The Regge symmetry, confocal conics,\\ and the Schl\"afli formula}
\author{Arseniy Akopyan}
\address{Arseniy Akopyan, Institute of Science and Technology Austria (IST Austria), Am Campus~1, 3400 Klosterneuburg, AUSTRIA}
\email{akopjan@gmail.com}
\thanks{A.A. was supported by the European Research Council (ERC) under the European Union's Horizon 2020 research and innovation programme (grant agreement No 78818 Alpha).}
\author{Ivan Izmestiev}
\address{Ivan Izmestiev. University of Fribourg Department of Mathematics\\ Chemin du Mus\'ee 23 \\ CH-1700 Fribourg P\'erolles \\ SWITZERLAND}
\email{ivan.izmestiev@unifr.ch}
\thanks{I.I. was supported by the Swiss National Science Foundation grants 200021\_169391 and 200021\_179133.}
\begin{document}

\begin{abstract}
The Regge symmetry is a set of remarkable relations between two tetrahedra whose edge lengths are related in a simple fashion.
It was first discovered as a consequence of an asymptotic formula in mathematical physics.
Here we give a simple geometric proof of Regge symmetries in Euclidean, spherical, and hyperbolic geometry.
\end{abstract}

\maketitle

\section{Introduction}
The goal of this article is to give an elementary proof of the following theorem known as the \emph{Regge symmetry}.

\begin{thm}
\label{thm:Regge}
Let $\Delta$ be a spherical, hyperbolic, or Euclidean tetrahedron with edge lengths $x$, $y$, $a$, $b$, $c$, $d$
as shown in Figure \ref{fig:TwoTetra}, left.
Then there is a (respectively spherical, hyperbolic, or Euclidean) tetrahedron $\bar{\Delta}$ with edge lengths $x$, $y$, $s-a$, $s-b$, $s-c$, $s-d$,
where $s = \frac{a+b+c+d}2$, see Figure \ref{fig:TwoTetra}, right.
Besides, the following holds.
\begin{enumerate}
\item
The dihedral angles at the $x$-edge in $\Delta$ and $\bar{\Delta}$ are equal.
The same holds for the dihedral angles at the $y$-edge.
\item
If $\alpha$, $\beta$, $\gamma$, $\delta$ are the dihedral angles at the edges $a$, $b$, $c$, $d$ of $\Delta$,
then the dihedral angles at the edges $s-a$, $s-b$, $s-c$, $s-d$ in $\bar{\Delta}$ are equal to
$\sigma - \alpha$, $\sigma - \beta$, $\sigma - \gamma$, $\sigma - \delta$, where $\sigma = \frac{\alpha+\beta+\gamma+\delta}2$.
\item
Tetrahedra $\Delta$ and $\bar{\Delta}$ have equal volume.
\end{enumerate}
\end{thm}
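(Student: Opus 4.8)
The plan is to treat parts (1)--(2) together by a direct geometric construction of $\bar\Delta$, and then to deduce part (3) from the Schl\"afli formula. First I would record the combinatorial anatomy of the Regge move. Writing $x=12$ and $y=34$, the four remaining edges form the $4$-cycle $13,24,14,23$, which splits into the two pairs of opposite edges $\{a,b\}=\{13,24\}$ and $\{c,d\}=\{14,23\}$. The replacement $a\mapsto s-a$, etc., preserves the difference inside each opposite pair, $(s-a)-(s-b)=b-a$, while interchanging the two pair-sums, $(s-a)+(s-b)=c+d$ and $(s-c)+(s-d)=a+b$. Constant differences and constant sums of this kind are precisely the signature of a confocal pencil (a hyperbola keeps the difference of focal radii fixed, an ellipse their sum), and this is what suggests the route via confocal conics.

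For (1)--(2) I would reconstruct $\Delta$ from its two opposite edges $x,y$: place the lines carrying them as two skew lines with a common perpendicular, recording the twist angle and the offsets of the four vertices along the two lines. Projecting along the common perpendicular turns the four side-lengths into distances between two points on one line and two points on another through a common centre, i.e.\ into focal distances with respect to a conic determined by $x$ and $y$. The confocal partner of that conic then yields a second admissible vertex placement whose side-lengths are exactly $s-a,\dots,s-d$, the constant-difference/constant-sum law of the pencil reproducing the relations above; this placement assembles into $\bar\Delta$. Because the foci (hence the data attached to the $x$- and $y$-edges) are common to the whole pencil, the dihedral angles along $x$ and along $y$ are read off from the same focal configuration and are unchanged, giving (1); the side-edge dihedral angles are the angles the faces make with the conic and transform by the reflection $\theta\mapsto\sigma-\theta$ forced by the same sum/difference bookkeeping, giving (2). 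To treat the spherical and hyperbolic cases on the same footing as the Euclidean one, I would run this construction inside a projective (Cayley--Klein) model, where ``confocal conics'' become confocal quadrics and the Euclidean statement is the degenerate limit.

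For (3) I would use the Schl\"afli formula. Embed $\Delta$ in a smooth family $\Delta_t$; by (1)--(2) each member has a Regge partner $\bar\Delta_t$, and I set $F(t)=\Vol(\Delta_t)-\Vol(\bar\Delta_t)$. In the spherical case Schl\"afli gives $d\Vol=\tfrac12\sum_e \ell_e\,d\theta_e$ (with the opposite sign hyperbolically), so, writing $\alpha,\beta,\gamma,\delta$ for the side-edge angles of $\Delta_t$ and $\sigma=\tfrac12(\alpha+\beta+\gamma+\delta)$,
\begin{align*}
2F'(t) &= \Big(x\dot\theta_x+y\dot\theta_y+a\dot\alpha+b\dot\beta+c\dot\gamma+d\dot\delta\Big)\\
&\quad -\Big(x\dot\theta_x+y\dot\theta_y+\textstyle\sum_{e}(s-e)(\dot\sigma-\dot\theta_e)\Big).
\end{align*}
The $x$- and $y$-terms cancel by (1). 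For the side edges, $\sum_e(s-e)=2s$ and $\sum_e\dot\theta_e=2\dot\sigma$, whence $\sum_e(s-e)(\dot\sigma-\dot\theta_e)=a\dot\alpha+b\dot\beta+c\dot\gamma+d\dot\delta$ as well, so $F'(t)\equiv0$ and $F$ is constant along the family. It then suffices to evaluate $F$ at a single configuration: deform $\Delta$ within the realizable region to the symmetric tetrahedron with $a=b=c=d$, where $s-a=a$ forces $\bar\Delta=\Delta$ and hence $F=0$. Therefore $F\equiv0$, which is (3); the Euclidean case follows by rescaling and passing to the small-tetrahedron limit.

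The main obstacle is the confocal-conics step behind (1)--(2): one must make the dictionary between a tetrahedron with marked opposite edges and a point-pair on a confocal pencil precise enough to guarantee that the partner placement closes up to a genuine, non-degenerate tetrahedron $\bar\Delta$ in the same geometry, and to identify its dihedral angles with the conic-theoretic quantities claimed above. Carrying this out uniformly in the spherical, Euclidean, and hyperbolic settings through one projective model is the crux of the argument; by comparison the Schl\"afli computation for (3) is routine once (1)--(2) are established, modulo checking that the realizable tetrahedra with the prescribed $x,y$ form a connected set so that the symmetric configuration can indeed be reached.
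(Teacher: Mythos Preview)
Your Schl\"afli argument for (3) in the spherical and hyperbolic cases is essentially the paper's: the derivative computation is identical. The only difference is the integration constant---the paper increases $y$ until the dihedral angle at $x$ reaches $\pi$, so that $\Delta_t$ and $\bar\Delta_t$ flatten simultaneously (their angles at $x$ being equal by (1)) and both volumes vanish; this sidesteps the connectedness check you flag. For the Euclidean case the paper gives a direct argument via the formula $\Vol=\tfrac{2}{3}A_1A_2\sin\theta_{12}/\ell_{12}$ together with Ivory's lemma, and only mentions the small-tetrahedron limit as a remark.

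For (1)--(2), your confocal-conic intuition is on target, but the construction you sketch has a genuine gap and does not match the paper. Projecting along the common perpendicular of the $x$- and $y$-lines does not preserve the lengths $a,b,c,d$, so it is unclear how ``focal distances with respect to a conic determined by $x$ and $y$'' arise or why the confocal partner closes up to a tetrahedron with the same $y$. The paper's construction is both simpler and asymmetric in $x,y$: take the endpoints $F_1,F_2$ of the $x$-edge as foci and, in the two existing face-planes through $F_1F_2$, place new points $\bar K,\bar L$ at focal distances $(s-b,s-a)$ and $(s-c,s-d)$. Your sum/difference relations then say exactly that $K,\bar L$ and $L,\bar K$ lie on common prolate spheroids about $F_1F_2$ while $K,\bar K$ and $L,\bar L$ lie on common hyperboloids of revolution, so $KL$ and $\bar K\bar L$ are the two great diagonals of a box in a (degenerate, rotationally symmetric) confocal family, and Ivory's lemma gives $|\bar K\bar L|=|KL|=y$. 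Since $\bar K$ sits in the old plane $F_1F_2L$ and $\bar L$ in $F_1F_2K$, the dihedral angle at $x$ is unchanged by construction; swapping the roles of $x$ and $y$ gives the other half of (1). Part (2) is \emph{not} just ``the same bookkeeping'': the paper first proves a separate lemma that the log-tangents of the face half-angles at $x$ undergo the same Regge linear map as $a,b,c,d$ (via the product/quotient characterization of confocal conics), and then applies the dual half of that characterization---fixed angle, varying sides---to the spherical links at the vertices to obtain $\bar\alpha+\bar\delta=\beta+\gamma$ and $\bar\alpha-\bar\delta=\delta-\alpha$. Your one-line claim hides this second step.
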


\begin{figure}[ht]
\begin{center}
\input{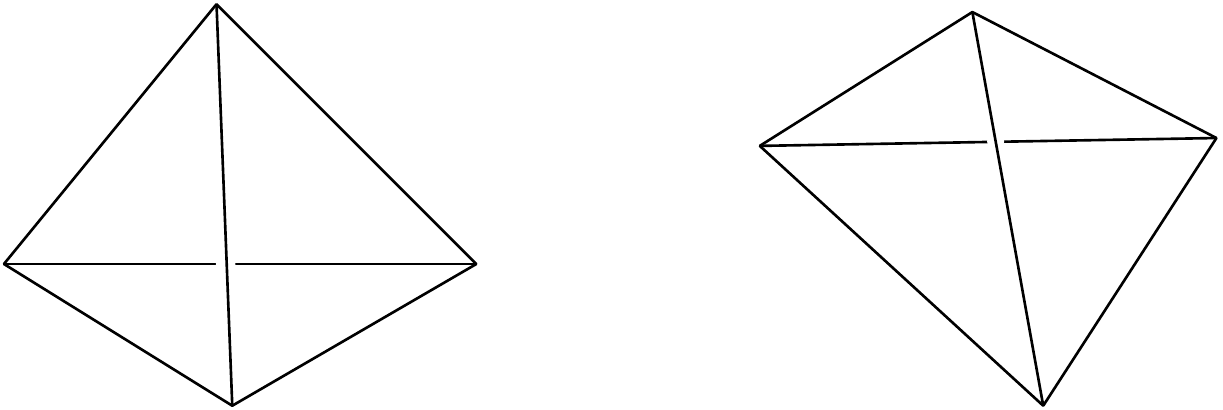_t}
\end{center}
\caption{The Regge symmetry.}
\label{fig:TwoTetra}
\end{figure}

There are more relations between the metric elements of the tetrahedra $\Delta$ and $\bar{\Delta}$:
\begin{itemize}
\item the solid angle at the $(x, a, d)$ vertex is equal to the solid angle at the $(x, s-b, s-c)$ vertex etc.;
\item the logarithms of tangents of half-angles at the side $x$ are subject to the same linear transformations as $a, b, c, d$
and $\alpha, \beta, \gamma, \delta$.
\end{itemize}
The first of these follows directly from parts (1) and (2) of Theorem \ref{thm:Regge}.
The second relation is Lemma \ref{lem:MoreSymmetry} below.

The Euclidean case of Theorem \ref{thm:Regge} was first stated by Ponzano and Regge in \cite{PR}
as a corollary of a symmetry and a conjectured asymptotics of $6j$-symbols.
This asymptotic formula was proved by Roberts in \cite{Rob99};
similar formulas involving spherical and hyperbolic tetrahedra were obtained in \cite{TW05}.
Of course, one would like to have a more direct and geometric proof of Theorem \ref{thm:Regge}.

A brute-force proof of the Euclidean case was given in the Appendices B and D of \cite{PR}.
Roberts observed that the Dehn invariants of $\Delta$ and $\bar{\Delta}$ coincide.
In the Euclidean case Sydler's theorem \cite{Syd65} implies that $\Delta$ and $\bar{\Delta}$ are scissors congruent.
In fact, it would be interesting to prove Theorem \ref{thm:Regge} by exhibiting a scissors congruence between $\Delta$ and $\bar{\Delta}$
such that the sides and dihedral angles fit in an appropriate way.

Scissors congruence of $\Delta$ and $\bar{\Delta}$ in the hyperbolic case was proved by Mohanty in \cite{Moh03}.
Mohanty extends the tetrahedra to ideal polyhedra, which are then cut into pieces,
therefore her construction does not apply in the Euclidean and spherical cases.
It is unknown whether hyperbolic and spherical analogs of Sydler's theorem are true.
Thus it remains an open question whether $\Delta$ and $\bar{\Delta}$ are scissors congruent in the spherical case.

The proof of Theorem \ref{thm:Regge} given in this article uses classical geometric theorems
such that Ivory's lemma about confocal quadrics and Schl\"afli's differential formula for the volume of a tetrahedron.
There are some trigonometric computations, but they are kept to a minimum.

\section{Confocal conics}
\subsection{Ellipses and hyperbolas revisited}
In this section $a$, $b$, $c$ denote the side lengths of a triangle,
and $\alpha$, $\beta$, $\gamma$ the angles opposite to the corresponding sides.

\begin{lem}
\label{lem:ProdQuot}
\begin{enumerate}
\item
Let the side length $c$ in a spherical, hyperbolic, or Euclidean triangle be fixed.
Then the sum of the other two side lengths $a+b$
determines and is determined by the product of tangents of half-angles $\tan \frac{\alpha}2 \tan \frac{\beta}2$,
and the difference $a-b$ determines and is determined by the ratio $\frac{\tan\frac{\alpha}2}{\tan\frac{\beta}2}$.
\item
Let the angle $\gamma$ in a spherical or hyperbolic triangle be fixed.
Then the sum of the other two angles $\alpha+\beta$ determines and is determined
by the product of (hyperbolic) tangents of half-sides $\tan \frac{a}2 \tan \frac{b}2$ or $\tanh \frac{a}2 \tanh \frac{b}2$ respectively,
and the difference $\alpha-\beta$ determines and is determined by the ratio $\frac{\tan\frac{a}2}{\tan\frac{b}2}$
or $\frac{\tanh\frac{a}2}{\tanh\frac{b}2}$ respectively.
\end{enumerate}
\end{lem}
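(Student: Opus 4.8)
The plan is to reduce everything to the classical half-angle formulas, which convert the product and the ratio of half-angle tangents into quantities that split cleanly into the ``$a+b$'' and ``$a-b$'' data. Write $s=\frac{a+b+c}{2}$ and let $\mathrm{sn}$ denote $\sin$, $\sinh$, or the identity, in the spherical, hyperbolic, and Euclidean cases respectively. In all three geometries the half-angle formula has the uniform shape
\[
\tan\frac{\alpha}{2}=\sqrt{\frac{\mathrm{sn}(s-b)\,\mathrm{sn}(s-c)}{\mathrm{sn}(s)\,\mathrm{sn}(s-a)}},
\]
and the formula for $\tan\frac{\beta}{2}$ is obtained by exchanging $a$ and $b$. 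Multiplying and dividing these two expressions, the factors telescope and I get
\[
\tan\frac{\alpha}{2}\tan\frac{\beta}{2}=\frac{\mathrm{sn}(s-c)}{\mathrm{sn}(s)},\qquad
\frac{\tan\frac{\alpha}{2}}{\tan\frac{\beta}{2}}=\frac{\mathrm{sn}(s-b)}{\mathrm{sn}(s-a)}.
\]

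For part (1) I fix $c$. Then $s-c=\frac{a+b-c}{2}$ and $s=\frac{a+b+c}{2}$ depend only on $a+b$, so the product $\tan\frac{\alpha}{2}\tan\frac{\beta}{2}$ is a function of $a+b$ alone; setting $t=\frac{a+b}{2}$ it equals $\frac{\mathrm{sn}(t-c/2)}{\mathrm{sn}(t+c/2)}$, whose derivative in $t$ has the sign of $\mathrm{sn}(c)>0$, so the dependence is strictly monotone and hence a bijection onto its range. Likewise $s-b=\frac{(a-b)+c}{2}$ and $s-a=\frac{c-(a-b)}{2}$ depend only on $a-b$, so the ratio is a strictly monotone function of $a-b$ (in the Euclidean case these reduce to the familiar $\frac{a+b-c}{a+b+c}$ and $\frac{c+(a-b)}{c-(a-b)}$). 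This settles (1) simultaneously in all three geometries.

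For part (2) I use the dual half-angle formula. With $\sigma=\frac{\alpha+\beta+\gamma}{2}$, in the hyperbolic case one has
\[
\tanh\frac{a}{2}=\sqrt{\frac{\cos\sigma\,\cos(\sigma-\alpha)}{\cos(\sigma-\beta)\,\cos(\sigma-\gamma)}},
\]
and in the spherical case the same identity with $\cos\sigma$ replaced by $-\cos\sigma$ (there $\sigma>\frac{\pi}{2}$). The spherical version I would get for free by applying part (1) to the polar triangle, whose sides and angles are $\pi-\alpha,\dots$ and $\pi-a,\dots$. Telescoping exactly as before yields $\tanh\frac{a}{2}\tanh\frac{b}{2}=\frac{\cos\sigma}{\cos(\sigma-\gamma)}$ and $\frac{\tanh\frac{a}{2}}{\tanh\frac{b}{2}}=\frac{\cos(\sigma-\alpha)}{\cos(\sigma-\beta)}$ (with the sign flip in the spherical case). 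Fixing $\gamma$, the first depends only on $\alpha+\beta$ and the second only on $\alpha-\beta$, and the monotonicity check is identical to part (1), the relevant nonzero factor now being $\sin\gamma$. Euclidean geometry is excluded precisely because there $\alpha+\beta=\pi-\gamma$ is already pinned down by $\gamma$, so no such bijection can exist.

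Granting the half-angle formulas, the algebra above is short and the telescoping is the easy part. The genuine work lies elsewhere. First, I must have the half-angle and dual half-angle formulas in this uniform form; the dual formula in the \emph{hyperbolic} case is the one point that cannot be borrowed from a polar triangle, so I would derive it directly from the second law of cosines $\cos\gamma=-\cos\alpha\cos\beta+\sin\alpha\sin\beta\cosh c$ (and its cyclic companions), and pin down the sign so that $\tanh^2\frac{a}{2}\in(0,1)$. Second, the phrase ``determines and is determined by'' is a bijectivity claim, so I must verify that the explicit functions of $a\pm b$ (resp.\ $\alpha\pm\beta$) are monotone on the \emph{entire} admissible parameter range, which comes down to checking that the arguments $s-a,s-b,s-c,s$ (resp.\ $\sigma,\sigma-\alpha,\sigma-\beta,\sigma-\gamma$) stay in the intervals where $\mathrm{sn}$ and $\cos$ keep a definite sign. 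I expect this sign-and-range bookkeeping, together with the hyperbolic dual formula, to be the main obstacle, rather than the identities themselves.
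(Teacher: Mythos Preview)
Your proposal is correct and follows essentially the same approach as the paper: both proofs invoke the half-angle formulas (and their duals for part (2)), multiply and divide them to obtain $\tan\frac{\alpha}{2}\tan\frac{\beta}{2}=\frac{\mathrm{sn}(s-c)}{\mathrm{sn}(s)}$ and $\frac{\tan\frac{\alpha}{2}}{\tan\frac{\beta}{2}}=\frac{\mathrm{sn}(s-b)}{\mathrm{sn}(s-a)}$, and then observe that with $c$ fixed these depend only on $a+b$ and $a-b$ respectively. The only cosmetic difference is that the paper establishes bijectivity by rewriting, e.g., $\frac{\sin(s-c)}{\sin s}=\frac{\tan\frac{a+b}{2}-\tan\frac{c}{2}}{\tan\frac{a+b}{2}+\tan\frac{c}{2}}$ and noting this is explicitly invertible for $\tan\frac{a+b}{2}$, whereas you argue monotonicity via the derivative; both work.
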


Part (1) of Lemma \ref{lem:ProdQuot} gives an alternative description of ellipses and hyperbolas:
instead of fixing the sum (or difference) of distances to the foci, one may fix the product (respectively quotient)
of the tangents of half-angles formed by the principal axis with the lines from the foci to a moving point, see Figure \ref{fig:ProdQuot}.

\begin{figure}[ht]
\begin{center}
\includegraphics{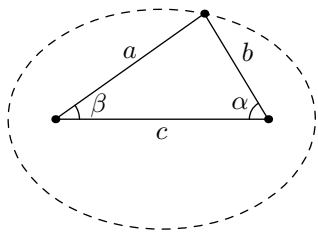}

\end{center}
\caption{$a+b = \const \Leftrightarrow \tan\frac{\alpha}2 \tan\frac{\beta}2 = \const$.}
\label{fig:ProdQuot}
\end{figure}

In the spherical case part (2) is dual to (and thus directly follows from) part (1).
In the hyperbolic case parts (1) and (2) become dual if they are extended to hyperbolic-de Sitter triangles.
Conics also appear in part (2): when the lines containing the sides $a$ and $b$ are fixed,
and the line containing the third side moves in such a way that the sum or difference $\alpha \pm \beta$ remains constant,
then the moving line remains tangent to a (spherical or hyperbolic) conic, see \cite{Izm17a}.

Finally note that part (2) of the Lemma can be reformulated so as to include the case of Euclidean triangles.
Let the angle $\gamma$ in a spherical, hyperbolic, or Euclidean triangle be fixed.
Then the area of the triangle determines and is determined by the products $\tan\frac{a}2 \tan\frac{b}2$,
$\tanh\frac{a}2 \tanh\frac{b}2$, and $ab$, respectively,
and the angle difference $\alpha - \beta$ determines and is determined by the quotients
$\frac{\tan\frac{a}2}{\tan\frac{b}2}$, $\frac{\tanh\frac{a}2}{\tanh\frac{b}2}$, and $\frac{a}{b}$, respectively.

The proof of Lemma \ref{lem:ProdQuot} is given in the last section of the paper.



\subsection{Confocal conics and quadrics}
For two points $F_1$ and $F_2$ in the Euclidean plane, the loci
\begin{equation}
\label{eqn:ConfocConics}
\{X \mid XF_1 + XF_2 = \const\} \text{ and } \{X \mid \left|XF_1 - XF_2\right| = \const\}
\end{equation}
form a \emph{confocal family} of conics: ellipses and hyperbolas with foci $F_1$ and $F_2$.
In a suitable coordinate system, the confocal family is described by the family of equations
\[
\frac{x^2}{a^2 - \lambda} + \frac{y^2}{b^2 - \lambda} = 1
\]
with parameter $\lambda$.

A spherical or hyperbolic conic is defined as the intersection of a quadratic cone with the unit sphere,
respectively with the hyperboloid in the Minkowski space modelling the hyperbolic plane.
Spherical and hyperbolic conics share many properties with Euclidean conics,
in particular they can be described by the same equations \eqref{eqn:ConfocConics}
(some of the foci of a hyperbolic conic may be ideal or hyperideal).
For a suitable choice of coordinates in $\R^3 \supset \Sph^2$ or $\R^{2,1} \supset \H^2$,
a confocal family of spherical or hyperbolic conics \eqref{eqn:ConfocConics} is described by the equations
\[
\frac{x^2}{a^2 - \lambda} + \frac{y^2}{b^2 - \lambda} - \frac{z^2}{c^2 \pm \lambda} = 0,
\]
where the $+$ sign is used in the spherical, and the $-$ sign in the hyperbolic case.
In other words, a confocal family is dual to a pencil of conics containing the absolute $x^2 + y^2 \pm z^2 = 0$.
For more details, see a recent survey \cite{Izm17a}.
Figure \ref{fig:ConfocConics} shows families of confocal conics on the sphere and in the Poincar\'e model of the hyperbolic plane.

\begin{figure}[ht]
\begin{center}
\includegraphics[width=.45\textwidth]{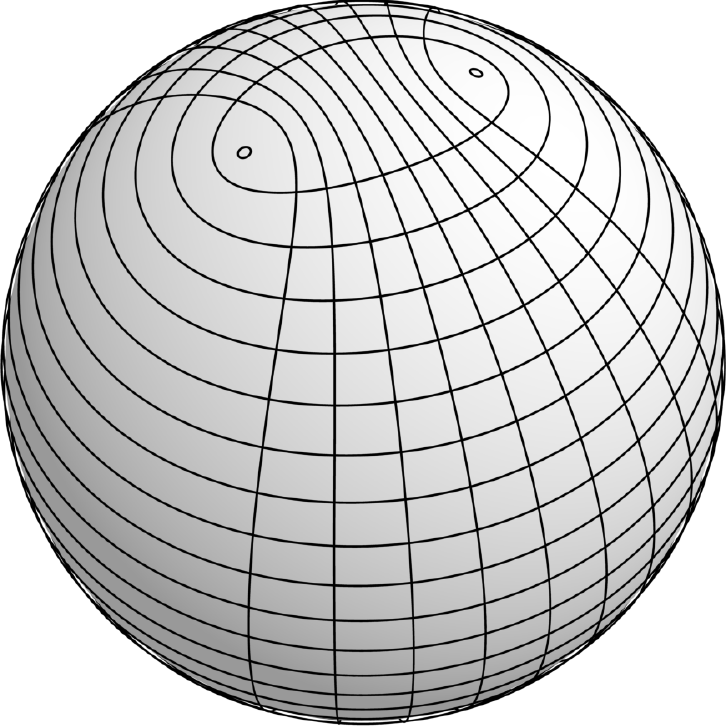}
\,\,\,\,
\includegraphics[width=.45\textwidth]{hyperbolic-conics-1.mps}
\end{center}
\caption{Confocal spherical and hyperbolic conics}
\label{fig:ConfocConics}
\end{figure}


The following theorem gives another description of confocal families, which follows directly from part (1) of Lemma \ref{lem:ProdQuot}.
\begin{thm}
\label{thm:EllCoord}
Let $F_1$ and $F_2$ be two points in $\E^2$ or $\Sph^2$ or $\H^2$.
For any point $X$ distinct from $F_1$ and $F_2$ denote
\[
k_1 = \tan \frac{\angle XF_1F_2}{2}, \quad k_2 = \tan \frac{\angle XF_2F_1}{2}.
\]

Then the conics with foci $F_1$ and $F_2$ are described by the equations
\[
k_1k_2 = \const \text{ and }\frac{k_1}{k_2} = \const.
\]
\end{thm}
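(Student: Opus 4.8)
The plan is to recognize that Theorem~\ref{thm:EllCoord} is nothing more than part (1) of Lemma~\ref{lem:ProdQuot} read off in the triangle $X F_1 F_2$, so the entire proof reduces to matching up the two sets of notation and checking that the three geometries are covered uniformly. Throughout, distances are measured in the ambient geometry ($\E^2$, $\Sph^2$, or $\H^2$), and the conics are understood in the sense of \eqref{eqn:ConfocConics}; this is legitimate because the spherical and hyperbolic confocal families were described by exactly these equations in the preceding subsection (with the caveat that some foci may be ideal or hyperideal, which is why the theorem restricts to genuine points $F_1, F_2$).

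First I would fix a point $X$ that together with $F_1$ and $F_2$ forms a genuine triangle, and set up the correspondence with the notation of Lemma~\ref{lem:ProdQuot}. In the triangle $X F_1 F_2$ put
\[
\alpha = \angle X F_1 F_2, \quad \beta = \angle X F_2 F_1, \quad a = X F_2, \quad b = X F_1,
\]
so that $a$ is the side opposite $\alpha$, $b$ the side opposite $\beta$, and the third side $c = F_1 F_2$ is opposite the angle at $X$. Since $F_1$ and $F_2$ are fixed, $c$ is a fixed side length, which is precisely the hypothesis of the lemma. With this labeling $k_1 = \tan\frac{\alpha}{2}$ and $k_2 = \tan\frac{\beta}{2}$, while the two focal quantities become
\[
a + b = X F_1 + X F_2, \qquad a - b = X F_2 - X F_1.
\]
Part (1) of Lemma~\ref{lem:ProdQuot} then gives, for fixed $c$, a bijective correspondence between the value of $a+b$ and the value of $k_1 k_2$, and likewise between $a-b$ and $k_1/k_2$. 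Hence the level sets coincide: the locus $\{X \mid X F_1 + X F_2 = \const\}$ is exactly a locus $\{X \mid k_1 k_2 = \const\}$, and $\{X \mid |X F_1 - X F_2| = \const\}$ is exactly a locus $\{X \mid k_1/k_2 = \const\}$ (up to replacing the ratio by its reciprocal, which merely swaps the two branches of the hyperbola). By \eqref{eqn:ConfocConics} the former loci are the ellipses and the latter the hyperbolas of the confocal family, which is the claim.

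The only points requiring care, and where I expect the (modest) main obstacle to lie, are the degenerate configurations rather than any real analytic content. When $X$ lies on the line $F_1 F_2$ the triangle collapses and one of $k_1, k_2$ vanishes or blows up, so the expressions $k_1 k_2$ and $k_1/k_2$ become indeterminate precisely on the degenerate confocal conics supported by the focal axis; and in the spherical and hyperbolic settings one must also check the behavior near antipodal or ideal points. I would dispose of these by a continuity/closure argument: on the open dense set of non-degenerate $X$ the bijection above already forces the two descriptions of each conic to agree, and both descriptions extend continuously to the exceptional locus. Once this is noted, the passage from Lemma~\ref{lem:ProdQuot} to Theorem~\ref{thm:EllCoord} is immediate and uniform across all three geometries.
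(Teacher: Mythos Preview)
Your proposal is correct and is exactly the approach the paper takes: the paper simply states that Theorem~\ref{thm:EllCoord} ``follows directly from part (1) of Lemma~\ref{lem:ProdQuot},'' and your write-up just makes that derivation explicit by matching the triangle $XF_1F_2$ to the notation of the lemma. Your discussion of the degenerate configurations is more than the paper provides, but it is harmless extra care rather than a different method.
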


\subsection{Ivory's lemma}
In higher dimensions, a confocal family of Euclidean quadrics is described by the equations
\begin{equation}
\label{eqn:ConfocQuad}
\frac{x_1^2}{a_1^2 - \lambda} + \frac{x_2^2}{a_2^2 - \lambda} + \cdots + \frac{x_n^2}{a_n^2 - \lambda} = 1, \quad a_1^2 > \cdots > a_n^2.
\end{equation}
The parameter $\lambda$ varies from $-\infty$ to $a_1^2$.
All quadrics with $\lambda < a_n^2$ are ellipsoids.
As $\lambda$ passes through $a_i^2$, the type of the quadric changes.
Quadrics of different type intersect orthogonally, see e.~g. \cite{Tab05}.

Choose two quadrics of each type, that is choose pairs of distinct numbers
\[
\lambda_1, \lambda'_1 \in (-\infty, a_n^2), \quad \lambda_2, \lambda'_2 \in (a_n^2, a_{n-1}^2), \quad \ldots, \quad \lambda_n, \lambda'_n \in (a_2^2, a_1^2).
\]
If we denote by $Q(\lambda)$ the quadric \eqref{eqn:ConfocConics},
then the quadrics $Q(\lambda_i)$ and $Q(\lambda'_i)$ bound a layer homeomorphic to $Q(\lambda_i) \times [0,1]$.
The intersection of these layers is a right-angled box bounded by quadrics.
Denote this box by $B(\Lambda)$, where $\Lambda = (\lambda_1, \lambda'_1, \ldots, \lambda_n, \lambda'_n)$ are the chosen parameter values.

Similarly, in the spherical or hyperbolical space, confocal quadrics are intersections of the cones
\[
\frac{x_1^2}{a_1^2 - \lambda} + \frac{x_2^2}{a_2^2 - \lambda} + \cdots + \frac{x_n^2}{a_n^2 - \lambda} - \frac{x_{n+1}^2}{a_{n+1}^2 \pm \lambda} = 0,
\quad a_1^2 > \cdots > a_n^2
\]
with the unit sphere, respectively with the standard hyperboloid of two sheets.
As above, one chooses $n$ pairs of quadrics of different types; they bound a box $B(\Lambda)$ in the spherical or hyperbolic space.

\begin{thm}[Ivory's lemma]
\label{thm:Ivory}
For any box $B(\Lambda)$ in any confocal family of quadrics in $\E^n$, $\Sph^n$, or $\H^n$ the following are true:
\begin{enumerate}
\item
The great diagonals of $B(\Lambda)$ are equal.
\item
Let $D_i \colon \R^n \to \R^n$ (in the spherical and hyperbolic cases $D_i \colon \R^{n+1} \to \R^{n+1}$),
be the affine transformation with a diagonal matrix which sends the quadric $Q(\lambda)$ to the quadric $Q(\lambda'_i)$.
Then $D_i$ sends the vertices of $B(\Lambda)$ situated on $Q(\lambda_i)$ to the vertices of $B(\Lambda)$ situated on $Q(\lambda'_i)$.
\end{enumerate}
\end{thm}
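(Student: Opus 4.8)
The plan is to handle the Euclidean, spherical and hyperbolic cases uniformly by passing to the ambient linear space: $\R^n$ in the Euclidean case, and $\R^{n+1}$ carrying the quadratic form $Q_0(x)=x_1^2+\cdots+x_{n+1}^2$ (spherical) or $Q_0(x)=x_1^2+\cdots+x_n^2-x_{n+1}^2$ (hyperbolic), so that $\Sph^n=\{Q_0=1\}$ and $\H^n=\{Q_0=-1\}$. Each $D_i$ is a diagonal map, hence self-adjoint both for the standard product and for $Q_0$, and by construction it carries the quadric $Q(\lambda_i)$ onto $Q(\lambda_i')$. The single identity on which everything rests is that, writing $\Phi_\mu$ for the quadratic form defining the cone $Q(\mu)$,
\[
Q_0(D_iP)-Q_0(P)=(\lambda_i-\lambda_i')\,\Phi_{\lambda_i}(P)\qquad\text{for every }P,
\]
with the Euclidean analogue $|P|^2-|D_iP|^2=(\lambda_i'-\lambda_i)$ valid for $P\in Q(\lambda_i)$. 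First I would verify this by the one-line computation $\sum_k d_k^2P_k^2=\sum_k P_k^2+(\lambda_i-\lambda_i')\sum_k\frac{P_k^2}{a_k^2-\lambda_i}$ (and the analogous term for the $(n+1)$-st coordinate). Its content is that, although $D_i$ does \emph{not} preserve $\Sph^n$ or $\H^n$ globally, it does preserve $Q_0$ on the cone $Q(\lambda_i)$, because there $\Phi_{\lambda_i}$ vanishes.

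Next I would prove part (2). Inverting the confocal coordinates in the usual way, the coordinates of a box vertex $V$ with confocal parameters $(\mu_1,\dots,\mu_n)$, each $\mu_j\in\{\lambda_j,\lambda_j'\}$, satisfy $x_k^2=c\prod_j(a_k^2-\mu_j)\big/\prod_{l\neq k}(a_k^2-a_l^2)$ (with the obvious modification of the last coordinate in the curved cases), where $c$ is fixed by $Q_0(V)=\pm1$. Applying $D_i$ to a vertex with $\mu_i=\lambda_i$ multiplies $x_k^2$ by $(a_k^2-\lambda_i')/(a_k^2-\lambda_i)$, which merely trades the factor $(a_k^2-\lambda_i)$ for $(a_k^2-\lambda_i')$; thus $D_iV$ and the opposite vertex $V'$ (the one with $\mu_i=\lambda_i'$ and all other parameters unchanged) have proportional squared coordinates, so $D_iV=cV'$ for a single scalar $c>0$. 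Since the diagonal entries of $D_i$ are positive and the whole box lies in one orthant, the signs match; and the first step gives $Q_0(D_iV)=Q_0(V)=Q_0(V')$, which forces $c=1$. Hence $D_iV=V'$, proving (2); in the Euclidean case there is no normalization and the coordinate identity already yields $D_iV=V'$.

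Part (1) then follows almost formally. Index the $2^n$ vertices by $\varepsilon\in\{0,1\}^n$ according to the choice $\mu_j=\lambda_j$ or $\lambda_j'$; a great diagonal joins $V_\varepsilon$ to the opposite vertex $V_{\bar\varepsilon}$. To compare the diagonals of $\varepsilon=(0,\beta)$ and $\varepsilon'=(1,\beta)$, use part (2) to write the endpoints carrying a leading $1$ as images under $D_1$ of the endpoints carrying a leading $0$. In the curved cases the diagonal length is read off from $\langle V_\varepsilon,V_{\bar\varepsilon}\rangle_0$ (the law of cosines), and self-adjointness of $D_1$ gives $\langle V_{(0,\beta)},D_1V_{(0,\bar\beta)}\rangle_0=\langle D_1V_{(0,\beta)},V_{(0,\bar\beta)}\rangle_0$, i.e.\ the two diagonals are equal; in the Euclidean case the same conclusion comes from expanding $|V_\varepsilon-V_{\bar\varepsilon}|^2$, where the cross terms agree by self-adjointness and the remaining terms agree because $|P|^2-|D_1P|^2$ is constant on $Q(\lambda_1)$ by the first step. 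Since any two great diagonals are connected by a chain of single-coordinate flips and the corresponding flip graph on $\{0,1\}^n/(\varepsilon\sim\bar\varepsilon)$ is connected, all great diagonals are equal.

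The main obstacle, and the only place where confocality is genuinely used, is the first step in combination with part (2): one must check that the naive diagonal map $D_i$ — tailored only to push one cone to the next — actually sends box vertices \emph{lying on the sphere or hyperboloid} to box vertices lying on it, rather than to merely proportional points off the quadric. The vanishing of $\Phi_{\lambda_i}$ on $Q(\lambda_i)$ is exactly what rescues this. I expect the sign-and-normalization matching in (2) (that $D_iV$ equals $V'$ on the nose, in the same orthant, not just up to a positive multiple) to demand the most care, while the equality of diagonals in (1) is essentially automatic once (2) is in hand.
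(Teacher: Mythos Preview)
The paper does not actually prove Ivory's lemma: it states the theorem as a classical result and refers the reader to the literature (Stachel--Wallner, Izmestiev--Tabachnikov, Blaschke, Thimm), remarking that the spherical and hyperbolic versions are instances of a general theorem about St\"ackel nets. There is therefore no ``paper's own proof'' to compare against.

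That said, your outline is a correct direct argument and is essentially the classical one. The identity $Q_0\circ D_i-Q_0=(\lambda_i-\lambda_i')\Phi_{\lambda_i}$ is exactly the point: it shows that the naive diagonal scaling, while not an isometry of $\Sph^n$ or $\H^n$, restricts to one on the cone $\Phi_{\lambda_i}=0$, so box vertices on $Q(\lambda_i)$ land on $\Sph^n$ (resp.\ $\H^n$) again. Your derivation of (1) from (2) via self-adjointness of the diagonal map (moving $D_1$ across the inner product, and in the Euclidean case using that $|P|^2-|D_1P|^2=\lambda_1'-\lambda_1$ is constant on $Q(\lambda_1)$) is the standard and valid trick. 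Two places deserve a line more of justification than you give: first, the choice of \emph{positive} diagonal entries for $D_i$ and the restriction to a single orthant component of the box are genuine conventions that make (2) literally true and should be stated explicitly; second, the inversion formula for confocal coordinates in the curved cases (``with the obvious modification of the last coordinate'') is routine but not quite a one-liner. Neither is a real gap.
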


Figure \ref{fig:Ivory} illustrates the case $n=2$.
 
\begin{figure}[ht]
\begin{center}
\includegraphics{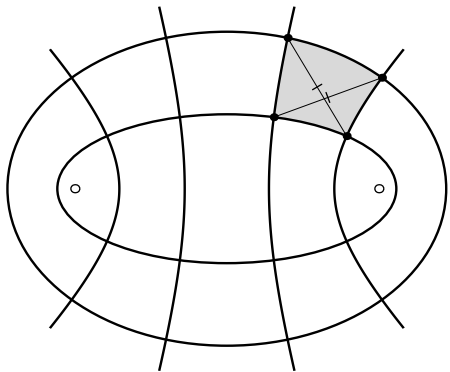}
\end{center}
\caption{Ivory's lemma in the plane.}
\label{fig:Ivory}
\end{figure}

Ivory's lemma holds also for degenerate confocal families obtained from non-generic pencils.
An example of such a family is obtained by rotating the family \eqref{eqn:ConfocConics} (see Figure \ref{fig:Ivory} and Figure \ref{fig:DihAngles}, top)
about the axis $F_1F_2$ and adding planes passing through this axis.

The original statement of Ivory's lemma (implicit in Ivory's work and made explicit by Chasles) deals with ellipsoids in $\E^3$.
The spherical and hyperbolic cases are discussed in \cite{StW04,IT17} but are special cases of a far more general theorem:
Ivory's lemma holds for boxes bounded by the coordinate hypersurfaces of a St\"ackel net, \cite{Bla28,Thimm78,IT17}.

\section{Dihedral angles}
\begin{proof}[Proof of Theorem \ref{thm:Regge}, part (1).]

Denote the vertices of the tetrahedron $\Delta$ by $F_1, F_2, K, L$ so that
\[
|F_1F_2| = x, \quad |F_1K| = a, \quad |F_2K| = b.
\]
In the plane of the triangle $F_1F_2K$ choose a point $\bar{L}$ so that the triangle $F_1F_2\bar{L}$
lies on the same side of $F_1F_2$ as the triangle $F_1F_2K$ and has side lengths
\[
|F_1 \bar{L}| = s-c, \quad |F_2 \bar{L}| = s-d.
\]
Similarly, in the plane $F_1F_2L$ construct a triangle $F_1F_2\bar{K}$ on the same side of $F_1F_2$ as $F_1F_2L$
with side lengths
\[
|F_1 \bar{K}| = s-b, \quad |F_2 \bar{K}| = s-a,
\]
see Figure \ref{fig:DihAngles}.
The segments $KL$ and $\bar{K}\bar{L}$ are great diagonals of a ``parallelopiped''
bounded by a degenerate confocal family of quadrics.
Indeed, due to $(s-c) + (s-d) = a + b$ and $(s-b) + (s-a) = c + d$
each of the pairs of points $K$, $\bar{L}$ and $\bar{K}$, $L$ lies on an ellipse with the foci $F_1$ and $F_2$.
And due to $(s-b) - (s-a) = a-b$ and $(s-c) - (s-d) = d-c$
each of the pairs $K$, $\bar{K}$ and $L$, $\bar{L}$ lies on a two-sheeted hyperboloid
obtained by rotating a hyperbola with foci $F_1$ and $F_2$ about the focal axis.
Thus, by Ivory's lemma one has $|\bar{K}\bar{L}| = |KL| = y$.

\begin{figure}[ht]
\begin{center}
\includegraphics[width=.6\textwidth]{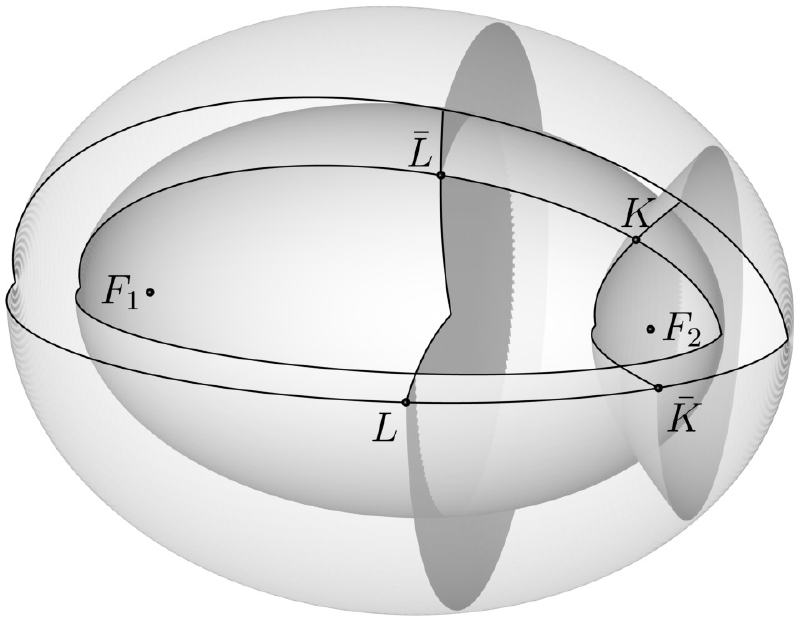}\\
\hspace{.6cm}\includegraphics[width=.47\textwidth]{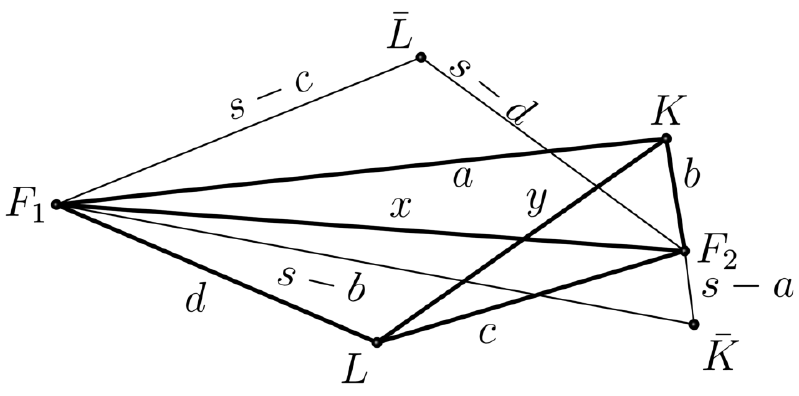}
\end{center}
\caption{To the proof of Theorem \ref{thm:Regge}, part (1).}
\label{fig:DihAngles}
\end{figure}

It follows that a tetrahedron $\bar{\Delta}$ exists and is isometric to the tetrahedron $F_1F_2\bar{K}\bar{L}$.
By construction, the dihedral angle of $\Delta$ at the $x$-edge is equal to the dihedral angle of $\bar{\Delta}$
at the $x$-edge.
By exchanging $x$ and $y$, one proves the equality of the dihedral angles at the $y$-edges.
\end{proof}

\begin{lem}
\label{lem:MoreSymmetry}
Denote by $\angle(a,x)$ the angle between the $a$-edge and $x$-edge of the tetrahedron $\Delta$ and let
\begin{gather*}
A = \log\tan \frac{\angle(a,x)}2, \quad B = \log\tan\frac{\angle(b,x)}2,\\
C = \log\tan\frac{\angle(c,x)}2, \quad D = \log\tan\frac{\angle(d,x)}2.
\end{gather*}
Similarly, let $\bar{A}$, $\bar{B}$, $\bar{C}$, $\bar{D}$ be the logarithms of the tangents of the half-angles
between the sides $s-a$, $s-b$, $s-c$, $s-d$ and the side $x$ of the tetrahedron $\bar{\Delta}$.
Then one has
\[
\bar{A} = \frac{-A+B+C+D}2 \text{ etc.}
\]
\end{lem}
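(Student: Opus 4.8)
The plan is to rewrite each of the eight logarithms as the logarithm of one of the elliptic coordinates of Theorem~\ref{thm:EllCoord}, and then to read off the incidences supplied by the Ivory configuration built in the proof of part~(1). I keep the vertices $F_1,F_2,K,L$ of $\Delta$ and $F_1,F_2,\bar K,\bar L$ of $\bar\Delta$, with $|F_1F_2|=x$. For a point $X\in\{K,L,\bar K,\bar L\}$ set, as in Theorem~\ref{thm:EllCoord},
\[
\kappa_1(X)=\tan\frac{\angle XF_1F_2}2,\qquad \kappa_2(X)=\tan\frac{\angle XF_2F_1}2 .
\]
Since the angle between an edge and the $x$-edge is exactly one of the two base angles $\angle XF_1F_2$ or $\angle XF_2F_1$ of the triangle $F_1F_2X$, each of $A,B,C,D$ and $\bar A,\bar B,\bar C,\bar D$ equals $\log\kappa_1(X)$ or $\log\kappa_2(X)$ for the appropriate point. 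For instance $A=\log\kappa_1(K)$ and $B=\log\kappa_2(K)$, while $\bar A=\log\kappa_2(\bar K)$ and $\bar B=\log\kappa_1(\bar K)$; likewise $D=\log\kappa_1(L)$, $C=\log\kappa_2(L)$, $\bar C=\log\kappa_1(\bar L)$, $\bar D=\log\kappa_2(\bar L)$.

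Next I would invoke the confocal incidences already established in the proof of part~(1): the points $K$ and $\bar L$ lie on one ellipse and $\bar K,L$ on another with foci $F_1,F_2$, while $K,\bar K$ lie on one hyperbola (the meridian of the rotated two-sheeted hyperboloid) and $L,\bar L$ on another. By Theorem~\ref{thm:EllCoord} the product $\kappa_1\kappa_2$ is constant along a confocal ellipse and the ratio $\kappa_1/\kappa_2$ is constant along a confocal hyperbola; because only the angles made with the focal axis $F_1F_2$ enter these quantities, Theorem~\ref{thm:EllCoord} may be applied in the plane through $F_1F_2$ containing the point in question, even after the rotation. Taking logarithms turns the two product relations into sums and the two ratio relations into differences, yielding
\begin{align*}
A+B &= \bar C+\bar D, & \bar A+\bar B &= C+D,\\
A-B &= \bar B-\bar A, & D-C &= \bar C-\bar D.
\end{align*}

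Finally I would solve this linear system. From the two relations involving $\bar A,\bar B$ one gets $2\bar A=(C+D)-(A-B)=-A+B+C+D$, hence $\bar A=\frac{-A+B+C+D}2$; the symmetric manipulations with the other pairs give $\bar B,\bar C,\bar D$ obeying the same pattern, which is precisely the transformation $a,b,c,d\mapsto s-a,s-b,s-c,s-d$ applied to $A,B,C,D$. The main obstacle is the bookkeeping in the first step: one must track carefully which base angle at $F_1$ or $F_2$ corresponds to $\kappa_1$ rather than $\kappa_2$ for each of the four points, and which confocal incidence (ellipse, giving a product, versus hyperbola, giving a ratio) links a barred point to an unbarred one. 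Once these identifications and the attendant sign conventions are pinned down, what remains is the elementary linear algebra above.
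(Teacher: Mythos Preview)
Your argument is correct and coincides with the paper's proof in substance: both derive the two relations $\bar A+\bar B=C+D$ and $\bar A-\bar B=B-A$ (and the analogous pair for $\bar C,\bar D$) from the fact that equal sums, respectively differences, of focal distances force equal products, respectively ratios, of the half-angle tangents at the base~$x$. The only difference is packaging: the paper applies Lemma~\ref{lem:ProdQuot} directly to the pairs of triangles $(x,s{-}a,s{-}b)$ versus $(x,c,d)$ and $(x,a,b)$, whereas you route the same identities through Theorem~\ref{thm:EllCoord} and the confocal incidences of the Ivory configuration; the resulting linear system and its solution are identical.
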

\begin{proof}
Consider the triangles $(x,s-a,s-b)$ and $(x,c,d)$.
One has $(s-a) + (s-b) = c+d$.
Therefore by part (1) of Lemma \ref{lem:ProdQuot} the products of tangents of half-angles adjacent to the side $x$ in these triangles are equal.
In other words, one has $\bar{A} + \bar{B} = C + D$.

Similarly, in the triangles $(x,s-a,s-b)$ and $(x,a,b)$ one has $(s-a) - (s-b) = b-a$.
Therefore by part (1) of Lemma \ref{lem:ProdQuot} one has $\bar{A} - \bar{B} = B - A$.

The formula for $\bar{A}$ follows; formulas for $\bar{B}$, $\bar{C}$, $\bar{D}$ are proved similarly.
\end{proof}

%

\begin{proof}[Proof of Theorem \ref{thm:Regge}, part (2).]
Denote the dihedral angles in the tetrahedron $\bar{\Delta}$ at the edges $s-a$, $s-b$, $s-c$, $s-d$
by $\bar{\alpha}$, $\bar{\beta}$, $\bar{\gamma}$, and $\bar{\delta}$.

Consider the spherical section of the tetrahedron $\Delta$ at the vertex adjacent to the sides $x$, $b$, and $c$
and the spherical section of $\bar\Delta$ at the vertex adjacent to $x$, $s-a$, $s-d$.
These spherical triangles have a common angle: the dihedral angle at the edge $x$.
Besides, by Lemma \ref{lem:MoreSymmetry} one has $\bar A + \bar D = B + C$,
which implies that the products of tangents of half-sides adjacent to the common angle in these triangles are equal.
Thus by part (2) of Lemma \ref{lem:ProdQuot} one has $\bar\alpha + \bar\delta = \beta + \gamma$.

Apply Lemmas \ref{lem:MoreSymmetry} and \ref{lem:ProdQuot} to the spherical sections
at the vertices $(x,a,d)$ and $(x,s-a,s-d)$ to obtain $\bar\alpha - \bar\delta = \delta - \alpha$.

It follows that $\bar\alpha = \frac{-\alpha+\beta+\gamma+\delta}2 = \sigma - \alpha$
and similarly for the other dihedral angles.
\end{proof}

%
%
%
%

\section{Volume}
Let $A_1$ and $A_2$ be the areas of two faces of a Euclidean tetrahedron $\Delta$,
let $\ell_{12}$ be the length of their common edge, and let $\theta_{12}$ be the dihedral angle at this edge.
Then one has
\[
\Vol(\Delta) = \frac{2}{3} \frac{A_1 A_2 \sin\theta_{12}}{\ell_{12}}.
\]
This formula easily follows from the ``height times base'' formulas for the area of a triangle and the volume of a tetrahedron.
We now use it in combination with the Ivory lemma in order to establish the equality of volumes of $\Delta$ and $\bar{\Delta}$ in the Euclidean case.

\begin{proof}[Proof of Theorem \ref{thm:Regge}, part (3), Euclidean case]
Tetrahedra $\Delta$ and $\bar{\Delta}$ are shown on Figure \ref{fig:DihAngles}, bottom, as $F_1F_2KL$ and $F_1F_2K'L'$.
The above volume formula implies
\[
\Vol(\Delta) = \frac{2}{3} \frac{A_K A_L \sin\phi}{x}, \quad \Vol(\Delta') = \frac{2}{3} \frac{A_{\bar{K}} A_{\bar{L}} \sin\phi}{x},
\]
where $\phi$ is the angle of both $\Delta$ and $\bar{\Delta}$ at the edge $x$.
Thus it suffices to show that $A_K A_L = A_{\bar{K}} A_{\bar{L}}$ or, since all four triangles share a common side,
that $h_K h_L = h_{\bar{K}} h_{\bar{L}}$, where $h_K$ denotes the distance from $K$ to the line $F_1F_2$ etc.

By Theorem \ref{thm:Ivory} (applied to a degenerate family of confocal conics)
the affine transformation $(x,y,z) \mapsto (px, qy, qz)$ that sends the smaller ellipsoid to the bigger one
maps $K$ to a point $K'$ lying on the same hyperboloid.
The points $K'$ and $\bar{K}$ are related by the rotation by the angle $\phi$ about the axis $F_1 F_2$, see Figure \ref{fig:TetraPar}.

\begin{figure}[ht]
\begin{center}
\includegraphics[width=.5\textwidth]{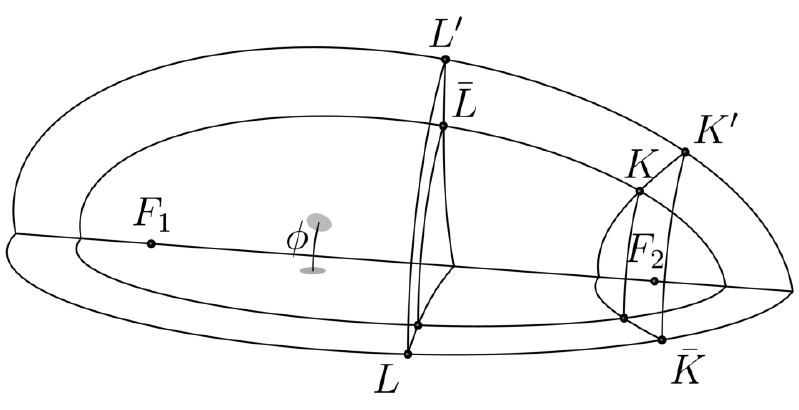}
\end{center}
\caption{$\Vol(\Delta) = \Vol(\bar{\Delta})$ in the Euclidean case.}
\label{fig:TetraPar}
\end{figure}

It follows that
\[
h_{\bar{K}} = h_{K'} = q h_K.
\]
Similarly, if $L'$ is the image of $\bar{L}$ under the above affine transformation, then one has
\[
h_{L} = h_{L'} = q h_{\bar{L}}.
\]
This implies $h_K h_L = h_{\bar K} h_{\bar L}$, and the equality of volumes is proved.
\end{proof}

\begin{rem}
The equality of volumes in the Euclidean case can also be proved by a straightforward calculation using the Cayley--Menger formula:
\[
\Vol(\Delta)^2 =
\frac{1}{288}
\begin{vmatrix}
0 & 1 & 1 & 1 & 1\\
1 & 0 & a^2 & b^2 & y^2\\
1 & a^2 & 0 & x^2 & d^2\\
1 & b^2 & x^2 & 0 & c^2\\
1 & y^2 & d^2 & c^2 & 0
\end{vmatrix}
\]
\end{rem}

Volumes of spherical and hyperbolic tetrahedra is a very different story from the volume of a Euclidean tetrahedron:
formulas for the volume are much more complicated, see a recent survey \cite{AM19}.
In order to prove the equality of volumes in $\Sph^3$ or $\H^3$, we use
the following theorem (\cite{Schl60}, for modern expositions see \cite{AVS93, Mil94}).
\begin{thm}[Schl\"afli]
For every smooth deformation of a spherical or hyperbolic tetrahedron one has
\[
d\Vol = \pm \frac12 \sum_{i=1}^6 \ell_i d\theta_i,
\]
where the sum is taken over the edges of the tetrahedron, $\ell_i$ is the length of the $i$-th side,
and $\theta_i$ is the dihedral angle at the $i$-th side.
The sign on the right hand side is $+$ in the spherical and $-$ in the hyperbolic case.
\end{thm}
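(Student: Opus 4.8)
The plan is to prove the formula by the method of moving faces, realizing the tetrahedron in the vector model and computing the first variation of volume directly. I would work in $\R^4$ with the standard positive definite form (spherical case) or in $\R^{3,1}$ with the Minkowski form (hyperbolic case), and let $X=\Sph^3$ or $\H^3$ be the corresponding quadric $\langle x,x\rangle=\pm1$. Represent $\Delta$ as an intersection of four half-spaces with outward unit normals $u_0,\dots,u_3$, normalized by $\langle u_i,u_i\rangle=1$; then the Gram matrix satisfies $\langle u_i,u_j\rangle=-\cos\theta_{ij}$, where $\theta_{ij}$ is the dihedral angle at the edge $e_{ij}=F_i\cap F_j$. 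A smooth deformation of $\Delta$ is encoded by smooth paths $u_i(t)$, and since $\langle u_i,u_i\rangle\equiv1$ we have $\langle u_i,\dot u_i\rangle=0$ throughout.

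First I would compute the normal speed of a moving face. For a fixed point $x\in X$, its signed distance $\rho$ to the plane $\{\langle\,\cdot\,,u_i\rangle=0\}$ satisfies $\sin\rho=\langle x,u_i\rangle$ in the spherical case and $\sinh\rho=\langle x,u_i\rangle$ in the hyperbolic case; differentiating and setting $\rho=0$ on $F_i$ shows that the outward normal velocity of the face $F_i$ equals $\langle x,\dot u_i\rangle$ up to the curvature sign. The first-variation formula for the volume of a moving domain then gives
\[
\frac{d}{dt}\Vol(\Delta_t)=\pm\sum_{i}\int_{F_i}\langle x,\dot u_i\rangle\,dA(x)=\pm\sum_i\langle m_i,\dot u_i\rangle,\qquad m_i:=\int_{F_i}x\,dA(x),
\]
where each moment vector $m_i$ lies in the $3$-space $u_i^\perp$ that carries the model $X\cap u_i^\perp$ of the plane of $F_i$.

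The heart of the argument is to evaluate $m_i$ in terms of the edges. Using the Stokes identity $\int_F x\,dA=\tfrac12\oint_{\partial F}x\times dx$ (and its Minkowski analog), together with the fact that along a geodesic edge the binormal $x\times dx$ equals a constant unit conormal times arclength, I would obtain $m_i=\tfrac12\sum_{j\ne i}\ell_{ij}\,\nu_{ij}$, where $\ell_{ij}$ is the length of $e_{ij}$ and $\nu_{ij}=(u_j+\cos\theta_{ij}\,u_i)/\sin\theta_{ij}$ is the unit conormal to that edge inside $u_i^\perp$ (a short computation confirms $\langle\nu_{ij},\nu_{ij}\rangle=1$ and $\langle\nu_{ij},u_i\rangle=0$). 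Substituting into the variation, the term proportional to $u_i$ drops because $\langle u_i,\dot u_i\rangle=0$, and after grouping the unordered pairs $\{i,j\}$ one meets the combination $\langle u_j,\dot u_i\rangle+\langle u_i,\dot u_j\rangle=\frac{d}{dt}\langle u_i,u_j\rangle=\sin\theta_{ij}\,\dot\theta_{ij}$. The factors $\sin\theta_{ij}$ cancel, leaving exactly $\frac{d}{dt}\Vol(\Delta_t)=\pm\tfrac12\sum_{\{i,j\}}\ell_{ij}\,\dot\theta_{ij}$, which is the Schl\"afli formula.

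I expect the main obstacle to be the bookkeeping of signs and orientations rather than any deep difficulty: one must fix orientation conventions so that the Stokes identity and the conormal $\nu_{ij}$ carry consistent signs, and one must verify that the single curvature sign produced in the normal-speed step (the difference between $\sin\rho$ and $\sinh\rho$) is precisely what distinguishes the $+$ in the spherical from the $-$ in the hyperbolic case. A secondary point needing care is making the cross-product and Stokes computations rigorous in $\R^{3,1}$, where the relevant binormal must be defined through the Minkowski form; once these conventions are pinned down, the calculation runs uniformly in both geometries and in fact generalizes verbatim to higher-dimensional simplices.
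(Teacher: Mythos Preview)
The paper does not prove this statement: Schl\"afli's formula is quoted as a classical result with references to \cite{Schl60,AVS93,Mil94} and then used as a black box in the proof of Theorem~\ref{thm:Regge}(3). So there is no ``paper's own proof'' to compare against.

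That said, your outline is a correct and standard route to the formula. The two substantive steps both check out. First, the identity $m_i=\tfrac12\sum_{j\ne i}\ell_{ij}\,\nu_{ij}$ for the moment vector of a geodesic polygon on a $2$-sphere (or hyperbolic plane) inside $u_i^\perp$ is exactly what one gets from Green's formula applied to the linear function $x\mapsto\langle x,e\rangle$, which is a Laplace eigenfunction with eigenvalue $\mp 2$; your Stokes argument with $x\times dx$ is an equivalent packaging. Second, the regrouping $\langle u_j,\dot u_i\rangle+\langle u_i,\dot u_j\rangle=\sin\theta_{ij}\,\dot\theta_{ij}$ is immediate from $\langle u_i,u_j\rangle=-\cos\theta_{ij}$, and the $\sin\theta_{ij}$ indeed cancels against the denominator in $\nu_{ij}$. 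The one place to be careful is that your $\nu_{ij}=(u_j+\cos\theta_{ij}\,u_i)/\sin\theta_{ij}$ has $\langle\nu_{ij},u_j\rangle=\sin\theta_{ij}>0$, so it is the \emph{outward} conormal of $F_i$; the eigenvalue sign in the Green identity then supplies a compensating sign in the spherical case and not in the hyperbolic one, which is precisely how the global $\pm$ emerges. If you write it up, state the orientation and conormal conventions once at the outset and the rest is routine.
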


For a deformation of a Euclidean tetrahedron one has $\sum_{i=1}^6 \ell_i d\theta_i = 0$,
but we do not need this formula.

\begin{proof}[Proof of Theorem \ref{thm:Regge}, part (3), spherical and hyperbolic cases]
Deform the tetrahedron $\Delta$ by increasing the length of the side $y$ until the tetrahedron flattens
(the angle at the $x$-edge becomes equal to $\pi$).
Let $\Delta_t$ be the tetrahedron with $y=t$,
and let $\bar{\Delta_t}$ be the tetrahedron corresponding to~$\Delta_t$ under the Regge symmetry.
Since the dihedral angles of $\Delta_t$ and $\bar{\Delta_t}$ at the $x$-edges are equal,
at $t = y_{max}$ both tetrahedra flatten.
Thus we have
\[
\Vol(\Delta_t) \big|_{t = y_{\max}} = 0 = \Vol(\bar{\Delta_t}) \big|_{t = y_{max}}.
\]
By the Schl\"afli formula one has
\begin{gather*}
\frac{d}{dt} \Vol(\Delta_t) = \pm \frac12 (a \dot\alpha + b \dot\beta + c \dot\gamma + d \dot\delta + x \dot\phi + t \dot\psi)\\
\frac{d}{dt} \Vol(\bar{\Delta_t}) =
\pm \frac12 \left( (s-a)(\dot\sigma - \dot\alpha) + \cdots + (s-d)(\dot\sigma - \dot\delta) + x \dot\phi + t \dot\psi \right)
\end{gather*}
A simple computation
\begin{multline*}
(s-a)(\dot\sigma - \dot\alpha) + \cdots + (s-d)(\dot\sigma - \dot\delta)\\
= 4s \dot\sigma - (a+b+c+d) \dot\sigma - s (\dot\alpha + \dot\beta + \dot\gamma + \dot\delta)
+ a\dot\alpha + b\dot\beta + c\dot\gamma + d\dot\delta\\
= a\dot\alpha + b\dot\beta + c\dot\gamma + d\dot\delta
\end{multline*}
shows that the derivatives of the volumes coincide for all $t$.
It follows that $\Vol(\Delta_t) = \Vol(\bar{\Delta_t})$ for all $t$, and in particular $\Vol(\Delta) = \Vol(\bar{\Delta})$.
\end{proof}

\begin{rem}
Note that the Euclidean case can be derived from the spherical or hyperbolic cases
by taking the limit over a sequence of spherical tetrahedra shrinking to a point and tending in their shape to~$\Delta$.
\end{rem}

\section{Proof of Lemma~\ref{lem:ProdQuot}}
The lemma can be proved by synthetic methods, especially in the Euclidean case.
For the hyperbolic and the spherical cases one uses the Poincar\'e model or the stereographic projection.

Since synthetic proofs are somewhat lengthy, here we provide a short analytical proof.

The cosine law expresses an angle of a triangle in terms of its side lengths.
In the spherical and hyperbolic case there are dual cosine laws which express a side in terms of the angles.
By using classical trigonometric identities one can give these expressions a different form, known as half-angle and half-side formulas
(see e.g. \cite{Tod86} for the proof of the spherical case).
Below $s = \frac{a+b+c}2$ is the semiperimeter of the triangle.

\begin{itemize}
\item
For a spherical triangle:
\[
\tan \frac{\alpha}2 = \sqrt{\frac{\sin(s-b) \sin(s-c)}{\sin s \sin(s-a)}}, \quad
\tan \frac{a}{2} = \sqrt{\frac{-\cos\sigma \cos(\sigma-\alpha)}{\cos(\sigma-\beta) \cos(\sigma-\gamma)}}.
\]
\item
For a hyperbolic triangle:
\[
\tan \frac{\alpha}2 = \sqrt{\frac{\sinh(s-b) \sinh(s-c)}{\sinh s \sinh(s-a)}}, \quad
\tanh \frac{a}{2} = \sqrt{\frac{\cos\sigma \cos(\sigma-\alpha)}{\cos(\sigma-\beta) \cos(\sigma-\gamma)}}.
\]
\item
For a Euclidean triangle:
\[
\tan \frac{\alpha}2 = \sqrt{\frac{(s-b)(s-c)}{s(s-a)}}.
\]
\end{itemize}

Multiplying two half-angle formulas for a spherical triangle one obtains
\[
\tan\frac{\alpha}2 \tan\frac{\beta}2 = \frac{\sin(s-c)}{\sin s}
= \frac{\sin\left( \frac{a+b}2 - \frac{c}2 \right)}{\sin\left( \frac{a+b}2 + \frac{c}2 \right)}
= \frac{\tan\frac{a+b}2 - \tan\frac{c}2}{\tan\frac{a+b}2 + \tan\frac{c}2}.
\]
Thus, for a given $c$, the value of $\tan\frac{\alpha}2 \tan\frac{\beta}2$ is uniquely determined by the value of $a+b$.
The equation can be solved for $\tan\frac{a+b}2$, so that $a+b$ is uniquely determined by $\tan\frac{\alpha}2 \tan\frac{\beta}2$.

Similarly, dividing two half-angle formulas for a spherical triange one obtains
\[
\frac{\tan\frac{\alpha}2}{\tan\frac{\beta}2} = \frac{\sin(s-b)}{\sin(s-a)}
= \frac{\sin\left(\frac{c}2 + \frac{a-b}2\right)}{\sin\left(\frac{c}2 - \frac{a-b}2\right)}
= \frac{\tan\frac{c}2 + \tan\frac{a-b}2}{\tan\frac{c}2 - \tan\frac{a-b}2}.
\]

Computations are similar in the hyperbolic case, as well as for the products and ratios of tangents of half-sides.

In the Euclidean case by mutliplying and dividing the half-angle formulas one gets
\[
\tan\frac{\alpha}2 \tan\frac{\beta}2 = \frac{a+b-c}{a+b+c}, \quad
\frac{\tan\frac{\alpha}2}{\tan\frac{\beta}2} = \frac{c+(a-b)}{c-(a-b)}.
\]



\bibliographystyle{abbrv}
\bibliography{../ReggeSymm}

\end{document}